\newtheorem{theorem}{Theorem}
\newtheorem{definition}{Definition}
\newtheorem{lemma}{Lemma}
\newtheorem{cs}{Step}
\def\diam{\mathrm{diam}}
\def\Re{\mathrm{Re}}
\def\Im{\mathrm{Im}}
\begin{document}

\title[Algebraic reflexivity of diameter-preserving linear bijections]{Algebraic reflexivity of diameter-preserving linear bijections between $C(X)$-spaces}

\author{A. Jim{\'e}nez-Vargas}
\address{Departamento de Matem{\'a}ticas, Universidad de Almer{\'i}a, 04120, Almer{\i}a, Spain}
\email{ajimenez@ual.es}

\author{Fereshteh Sady}
\address{Department of Pure Mathematics, Faculty of Mathematical Sciences, Tarbiat Modares University, Tehran 14115-134, Iran}
\email{sady@modares.ac.ir}

\date{\today}
\subjclass[2020]{46B04, 47B38}

\keywords{Algebraic reflexivity; local linear map; diameter-preserving map; weighted composition operator.}

\begin{abstract}
We prove that if $X$ and $Y$ are first countable compact Hausdorff spaces, then the set of all diameter-preserving linear bijections from $C(X)$ to $C(Y)$ is algebraically reflexive. 
\end{abstract}
\maketitle

\section{Introduction and statement of the result}

This paper is concerned with the algebraic reflexivity of the set of all diameter-preserving linear bijections between $C(X)$-spaces. We shall denote by $C(X)$ the Banach algebra of all continuous complex-valued functions on a compact Hausdorff space $X$, with the usual supremum norm. 

Our interest focuses on the local behaviour of linear maps on $C(X)$ which preserve the diameter of the ranges of functions in $C(X)$. Let us recall that for compact Hausdorff spaces $X$ and $Y$,  a map $T$ from $C(X)$ into $C(Y)$  is said to be diameter-preserving if $\diam(T(f))=\diam(f)$ for all $f\in C(X)$, where $\diam(f)$ denotes the diameter of $f(X)$. 

Gy\H{o}ry and Moln\'{a}r \cite{GyoMol-98} introduced this kind of maps and stated the general form of diameter-preserving linear bijections of $C(X)$, when $X$ is a first countable compact Hausdorff space. Cabello S\'anchez \cite{Cab-99} and, independently, Gonz\'{a}lez and Uspenskij \cite{GonUsp-99} extended this description by removing the hypothesis of first countability. Namely, they proved the following 

\begin{theorem}\label{IsoLipWeaver}\cite{Cab-99, GonUsp-99, GyoMol-98}. 
Let $X$ and $Y$ be compact Hausdorff spaces. A linear bijection $T\colon C(X)\to C(Y)$ is diameter-preserving if and only if there exist a homeomorphism $\phi\colon Y\to X$, a linear functional $\mu\colon C(X)\to\mathbb{C}$ and a number $\lambda\in\mathbb{C}$ with $|\lambda|=1$ and $\lambda\neq-\mu(1_X)$ such that 
$$
T(f)(y)=\lambda f(\phi(y))+\mu(f)
$$
for every $y\in Y$ and $f\in C(X)$.
\end{theorem}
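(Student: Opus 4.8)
The plan is to treat the two implications separately, the forward (``only if'') direction being the substantial one.

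For the ``if'' direction, suppose $T(f)(y)=\lambda f(\phi(y))+\mu(f)$. Since $\phi$ maps $Y$ bijectively onto $X$, the range $T(f)(Y)$ equals $\lambda f(X)+\mu(f)$, that is, a rotation by $\lambda$ followed by a translation by $\mu(f)$ of the planar set $f(X)$; as $|\lambda|=1$ such a rigid motion preserves diameters, whence $\diam(T(f))=\diam(f)$. It then remains to check that $T$ is a bijection. I would verify injectivity directly: if $T(f)=0$ then $\lambda\,f\circ\phi$ equals the constant $-\mu(f)$, so $f=c1_X$ is constant, and then $c(\lambda+\mu(1_X))=0$ forces $c=0$ by the hypothesis $\lambda\neq-\mu(1_X)$. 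For surjectivity, given $g\in C(Y)$ I would solve $T(f)=g$ explicitly: setting $c=\mu(f)$ leads to $f=\lambda^{-1}(g\circ\phi^{-1}-c1_X)$ with $c=\mu(g\circ\phi^{-1})/(\lambda+\mu(1_X))$, which is well defined precisely because $\lambda+\mu(1_X)\neq0$.

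For the ``only if'' direction, the first observation is that $\diam(f)=0$ if and only if $f$ is constant. Hence $T$ carries the one-dimensional space $\mathbb{C}1_X$ into $\mathbb{C}1_Y$, and since $T$ is bijective this restriction is onto, so $T(1_X)=\alpha 1_Y$ with $\alpha\neq0$. Next, $\diam(\cdot)$ is a seminorm on $C(X)$ whose kernel is exactly the constants, so it descends to a genuine norm on the quotient $E_X:=C(X)/\mathbb{C}1_X$, and likewise on $E_Y$. Because $T$ is diameter-preserving and maps constants to constants, it induces a surjective linear isometry $\widehat{T}\colon(E_X,\diam)\to(E_Y,\diam)$, so that the problem is reduced to a Banach--Stone-type classification of the surjective linear isometries between these quotient spaces.

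The heart of the proof is to analyze $\widehat{T}$ through its adjoint. The dual of $(E_X,\diam)$ is the annihilator $(\mathbb{C}1_X)^{\perp}=\{\nu\in C(X)^{*}:\nu(1_X)=0\}$ with the dual norm, whose closed unit ball $B_X^{*}$ is the weak*-closed absolutely convex hull of the evaluation differences $\{\delta_x-\delta_{x'}:x,x'\in X\}$, since $\diam(f)=\sup\{|(\delta_x-\delta_{x'})(f)|:x,x'\in X\}$. The key geometric fact I would establish is that the extreme points of $B_X^{*}$ are precisely the functionals $\gamma(\delta_x-\delta_{x'})$ with $|\gamma|=1$ and $x\neq x'$. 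As $\widehat{T}^{\,*}$ is a surjective isometry, it carries the extreme points of $B_Y^{*}$ bijectively onto those of $B_X^{*}$; tracking this correspondence and the attached unimodular scalars produces a map $\phi\colon Y\to X$ together with a modulus-one constant, after which one shows that $\phi$ is a homeomorphism and that the scalar is a single $\lambda$ independent of the chosen pair. Reinserting the constant directions and using $T(1_X)=\alpha 1_Y$ recovers the functional $\mu$ and yields the stated formula, the non-degeneracy $\lambda\neq-\mu(1_X)$ being forced by the bijectivity of $T$ exactly as in the ``if'' part.

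The main obstacle is this last extraction step: showing that the induced point correspondence is single-valued, continuous in both directions, and globally consistent, so that one unimodular $\lambda$ and one homeomorphism $\phi$ serve simultaneously for all pairs. This is the point at which Gy\H{o}ry and Moln\'{a}r invoke first countability, and where the arguments of Cabello S\'anchez and of Gonz\'{a}lez--Uspenskij succeed without it through a more delicate topological analysis of the extreme-point structure.
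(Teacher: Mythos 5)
First, a point of order: the paper does not prove this statement at all --- Theorem \ref{IsoLipWeaver} is quoted with the citations \cite{Cab-99, GonUsp-99, GyoMol-98}, so there is no in-paper proof to match your attempt against. Judged on its own terms, your ``if'' direction is complete and correct: the rigid-motion observation, the injectivity argument, and the fixed-point solve $c=\mu(g\circ\phi^{-1})/(\lambda+\mu(1_X))$ for surjectivity are exactly right, and they also recover why $\lambda\neq-\mu(1_X)$ is forced (it is rederived, in essence, in Step \ref{Step 16} of this paper). Your ``only if'' direction correctly reconstructs the route of Cabello S\'anchez \cite{Cab-99}: pass to the quotient $E_X=C(X)/\mathbb{C}1_X$ with the diameter norm, note that $T$ maps constants onto constants and induces a surjective isometry $\widehat{T}$, and analyze the adjoint via extreme points of the dual ball, which by the bipolar theorem is the weak*-closed absolutely convex hull of $\{\delta_x-\delta_{x'}\colon x\neq x'\}$. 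This is genuinely different in flavour from the method the present paper uses for its \emph{local} maps (Steps \ref{Step 3}--\ref{Step 10}), which is closer to the original Gy\H{o}ry--Moln\'ar argument via peaking functions $h_{(x_1,x_2)}$ --- and, a small historical correction, Gy\H{o}ry--Moln\'ar invoke first countability precisely to manufacture such peak functions, not inside an extreme-point scheme.

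As a proof, however, your sketch has two genuine gaps, both of which you flag but neither of which is routine. First, the ``key geometric fact'' that the extreme points of $B_X^*$ are exactly $\gamma(\delta_x-\delta_{x'})$ with $|\gamma|=1$: the containment of extreme points in $\mathbb{T}\{\delta_x-\delta_{x'}\}\cup\{0\}$ follows from Milman's theorem, but proving that each $\gamma(\delta_x-\delta_{x'})$ \emph{is} extreme requires real work in the complex case, because the dual diameter norm is \emph{not} simply half the total variation norm there. For instance, on $X=\{1,2,3\}$ the functional $\nu=\delta_1+\omega\delta_2+\omega^2\delta_3$ with $\omega=e^{2\pi i/3}$ annihilates constants and has $\|\nu\|_{TV}/2=3/2$, yet testing against a function whose values are the vertices of an equilateral triangle of diameter one gives $|\nu(f)|=\sqrt{3}>3/2$; so the naive real-scalar measure decomposition does not transfer, and the extremality argument must be done honestly for complex scalars (as \cite{Cab-99} does). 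Second, the ``extraction step'' --- showing that the pair correspondence $\widehat{T}^*\bigl(\gamma(\delta_{y_1}-\delta_{y_2})\bigr)=\gamma'(\delta_{x_1}-\delta_{x_2})$ yields a single point map $\phi$, a single unimodular $\lambda$ coherent across all pairs, and that $\phi$ is a homeomorphism --- is exactly the combinatorial and topological bookkeeping that occupies Steps \ref{Step 4}--\ref{Step 10} of this paper in the analogous local setting (intersection-cardinality counting, constancy of the scalar via three-point configurations, continuity via nets). Since you defer both of these, what you have is a correct and well-chosen strategy matching one of the cited proofs, but not yet a proof: the two deferred items are the mathematical substance of the theorem.
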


The statement of Theorem \ref{IsoLipWeaver} also holds for the algebra of continuous real-valued functions on $X$.

A problem addressed by different authors is the Banach--Stone type representation of diameter-preserving maps between function spaces. See, for example, the papers by Aizpuru and Rambla \cite{AizRam-07}, Barnes and Roy \cite{BarRoy-02}, Font and Hosseini \cite{FonHos-19}, Gy\H{o}ry \cite{Gyo-99}, Jamshidi and Sady \cite{JamSad-16}, and Rao and Roy \cite{RaoRoy-01}.

On the other hand, a linear map $T$ of $C(X)$ into itself is called a local isometry (respectively, local automorphism) if for every $f\in C(X)$, there exists a surjective linear isometry (respectively, automorphism) $T_f$ of $C(X)$, depending on $f$, such that $T(f)=T_f(f)$.

It is said that the set of all surjective linear isometries (respectively, automorphisms) of $C(X)$ is algebraically reflexive if every local isometry (respectively, local automorphism) of $C(X)$ is a surjective linear isometry (respectively, automorphism) of $C(X)$. 

The algebraic reflexivity of both sets of surjective linear isometries and automorphisms of $C(X)$ was stated by Moln\'ar and Zalar in \cite[Theorem 2.2]{MolZal-99} whenever $X$ is a first countable compact Hausdorff space. Furthermore, Cabello and Moln\'ar \cite{CabMol-02} gave an example where that reflexivity fails even if $X$ lacks first countability at only one point. Afterwards, the algebraic reflexivity of some function spaces has been studied by Botelho and Jamison \cite{BotJam-11}, Cabello S\'{a}nchez and Moln\'{a}r \cite{CabMol-02}, Dutta and Rao \cite{DutRao-08}, Jarosz and Rao \cite{JaRao-03}, and Oi \cite{Oi19}, among others.

Motivated by the precedent considerations, we introduce the following concept.

\begin{definition}\label{def:local diameter-preserving map}
Let $X$ and $Y$ be compact Hausdorff spaces. A linear map $T\colon C(X)\to C(Y)$ is local diameter-preserving if for every $f\in C(X)$, there exists a diameter-preserving linear bijection $T_f\colon C(X)\to C(Y)$, depending on $f$, such that $T(f)=T_f(f)$. 

We say that the set of all diameter-preserving linear bijections from $C(X)$ to $C(Y)$ is algebraically reflexive if every local diameter-preserving linear map from $C(X)$ to $C(Y)$ is a diameter-preserving bijection.
\end{definition}

Our main result is the following. 

\begin{theorem}\label{main}
Let $X$ and $Y$ be first countable compact Hausdorff spaces. Then the set of all diameter-preserving linear bijections from $C(X)$ to $C(Y)$ is algebraically reflexive.
\end{theorem}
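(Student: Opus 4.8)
The plan is to force a local diameter-preserving linear map $T\colon C(X)\to C(Y)$ into the canonical form of Theorem~\ref{IsoLipWeaver}. One observation comes for free: since $T(f)=T_f(f)$ with each $T_f$ diameter-preserving, we get $\diam(T(f))=\diam(f)$ for every $f$, so $T$ is itself diameter-preserving and hence sends constant functions to constant functions (as $\diam(g)=0$ precisely when $g$ is constant). Feeding each $T_f$ through Theorem~\ref{IsoLipWeaver} records, for every $f\in C(X)$, a homeomorphism $\phi_f\colon Y\to X$, a scalar $\lambda_f$ with $|\lambda_f|=1$, and a constant $c_f\in\mathbb{C}$ with $T(f)=\lambda_f\,(f\circ\phi_f)+c_f 1_Y$. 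The whole point is that $\phi_f,\lambda_f,c_f$ depend on $f$, and I must manufacture a single homeomorphism $\phi$, a single unimodular $\lambda$, and a single linear functional $\mu$ that serve all $f$ at once.

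The organizing device will be the two-point difference functionals. For $y\neq y'$ in $Y$ I set $L_{y,y'}(f)=T(f)(y)-T(f)(y')$, a bounded linear functional on $C(X)$ that annihilates the constants, obeys the additive cocycle identity $L_{y,y'}+L_{y',y''}=L_{y,y''}$, and, by the per-$f$ representation above, satisfies $L_{y,y'}(f)=\lambda_f\bigl(f(\phi_f(y))-f(\phi_f(y'))\bigr)$ together with $|L_{y,y'}(f)|\le\diam(f)$. Equivalently, $T$ descends to a linear isometry between the quotient spaces $C(X)/\mathbb{C}1_X$ and $C(Y)/\mathbb{C}1_Y$ under the diameter norm, and $L_{y,y'}$ is the adjoint image of $\delta_y-\delta_{y'}$.

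The crux, and the step I expect to be the main obstacle, is to prove that each \emph{fixed} functional $L_{y,y'}$ is a genuine unimodular two-point difference, that is $L_{y,y'}=\lambda_{y,y'}(\delta_a-\delta_b)$ for distinct $a,b\in X$ with $|\lambda_{y,y'}|=1$. The difficulty is that the bound $|L_{y,y'}(f)|\le\diam(f)$ is far too weak on its own: a fixed three-point measure such as $\delta_{p_1}-\tfrac12\delta_{p_2}-\tfrac12\delta_{p_3}$ already satisfies it, so knowing merely that each individual value $L_{y,y'}(f)$ can be matched to \emph{some} two-point difference does not concentrate the representing measure $\nu$ of $L_{y,y'}$. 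I would instead exploit that a single homeomorphism $\phi_f$ simultaneously produces all values of $L_{y,y'}(f)$ as $y,y'$ vary, and use first countability to run a sequential peaking argument: if a third point $p$ lay in $\mathrm{supp}(\nu)$, then (first countability providing a neighbourhood base, hence functions peaking at $p$) one plays the two-point form of $L_{y,y'}$ against the location of $\phi_f(y),\phi_f(y')$ to reach a contradiction, forcing $\mathrm{supp}(\nu)$ to consist of at most two points and the common weight modulus to equal $1$. This is exactly the place where first countability is indispensable, consistent with reflexivity being known to fail without it.

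With the two-point structure available, the remaining gluing is comparatively soft. Fixing a base point $y_0$ and writing $L_{y,y_0}=\lambda_y(\delta_{a_y}-\delta_{b_y})$, the cocycle identity $L_{y,y'}=L_{y,y_0}-L_{y',y_0}$ together with the requirement that each difference again be two-point forces a common base point $\phi(y_0)$ and a common unimodular constant $\lambda$, yielding $L_{y,y'}=\lambda(\delta_{\phi(y)}-\delta_{\phi(y')})$ for a well-defined map $\phi\colon Y\to X$; injectivity of each $\phi_f$, applied to a function separating two candidate images, shows $\phi$ is injective. Setting $\mu(f)=T(f)(y_0)-\lambda f(\phi(y_0))$ gives a linear functional with $T(f)(y)=\lambda f(\phi(y))+\mu(f)$ for all $y\in Y$ and $f\in C(X)$. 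To finish I would check the hypotheses of Theorem~\ref{IsoLipWeaver}: continuity of $\phi$ follows from this formula by a subnet argument (any cluster point $x_*$ of $\phi(y_n)$ satisfies $f(x_*)=f(\phi(y))$ for all $f$, so $x_*=\phi(y)$); surjectivity follows since an omitted point $x_*\notin\phi(Y)$ (closed, as $Y$ is compact and $\phi$ continuous) admits an Urysohn function $f$ peaking at $x_*$ and vanishing on a neighbourhood of the compact set $\phi(Y)$, whence $f\circ\phi\equiv 0$ makes $T(f)$ constant although $\diam(f)>0$, contradicting diameter preservation. Thus $\phi$ is a continuous bijection of compact $Y$ onto $X$, hence a homeomorphism, $|\lambda|=1$ by the two-point weights, and $\lambda\neq-\mu(1_X)$ because $T(1_X)=(\lambda+\mu(1_X))1_Y$ must coincide with the nonzero constant produced by $T_{1_X}$. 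Theorem~\ref{IsoLipWeaver} then identifies $T$ as a diameter-preserving linear bijection, completing the proof.
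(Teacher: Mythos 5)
Your overall scheme coincides with the paper's in outline (force $T$ into the canonical form of Theorem \ref{IsoLipWeaver} and then read off bijectivity), and your opening observations are sound, but the step you yourself flag as the crux is a genuine gap, not just a terse spot. The claim that for \emph{every} fixed pair $y\neq y'$ the functional $L_{y,y'}$ is a unimodular two-point difference carries essentially the full strength of the theorem at those points, and the ``sequential peaking argument'' you offer for it is a placeholder: your attack fixes a pair in $Y$ and varies $f$, but for a fixed $(y,y')$ you have no control over where $\phi_f$ sends $y$ and $y'$, because you cannot prescribe $T(f)$. The paper's corresponding argument (Steps \ref{Step 3}--\ref{Step 5}) runs in the opposite direction precisely to gain that leverage: fixing a pair $(x_1,x_2)\in\widetilde{X}$, the surjectivity of each $\phi_f$ lets one \emph{choose} witnesses $y_1,y_2$ with $\phi_f(y_i)=x_i$, and then an averaging-plus-strict-convexity trick over $\mathcal{F}_{\{x_1,x_2\}}$, combined with compactness of $Y^2\times\mathbb{T}$ via the finite intersection property, produces one pair and one scalar serving all $f$ simultaneously. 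Nothing analogous exists from the $Y$ side: to exclude your own test measure $\delta_{p_1}-\tfrac12\delta_{p_2}-\tfrac12\delta_{p_3}$ as a candidate for $\nu$ you would have to couple the fixed pair $(y,y')$ to other pairs through a common $\phi_f$, but the measures attached to those other pairs are exactly what you do not yet know, so the sketch is circular; moreover a priori $\nu$ need not even be atomic, and ruling out a diffuse part is never addressed.

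There is structural evidence in the paper that no soft local argument can close this gap: even after the complete pair analysis, the representation $T(f)(y)=\lambda f(\phi_0(y))+\mu(f)$ is obtained only on a subset $Y_0\subseteq Y$ (the image of the injection $\varphi$ of Step \ref{Step 8}), and establishing your claim for a pair meeting $Y\setminus Y_0$ is exactly the content of Step \ref{Step 14}, where the authors must invoke the Gleason--Kahane--Zelazko theorem together with a nontrivial iteration --- building a sequence with $\phi_0(y_{i+1})=\phi_f(y_i)$, extracting by sequential compactness (this is where first countability of $Y$ enters) a point $z_0$ with $\phi_0(z_0)=\phi_f(z_0)$, and exploiting the unimodular factor $(\lambda^{-1}\lambda_f)^n$ to show $S_y(f)\neq 0$ for non-vanishing $f$. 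Your outline contains no substitute for this mechanism. Two lesser, more repairable omissions: the ``comparatively soft'' gluing hides a real case analysis, since the four-atom difference $L_{y,y_0}-L_{y',y_0}$ can collapse to two atoms through several cancellation patterns (shared source, shared target, or source matched to target with a sign flip), which is what the paper's Steps \ref{Step 7}--\ref{Step 10} sort out; and Theorem \ref{IsoLipWeaver} as stated quantifies over linear \emph{bijections}, so it does not by itself certify that a map in canonical form is bijective --- the short direct computation of the paper's Step \ref{Step 16} is still required.
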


Our proof consists in showing that every local diameter-preserving linear map $T$ from $C(X)$ to $C(Y)$ can be expressed in the form 
$$
T(f)(y)=\lambda f(\phi(y))+\mu(f)\qquad \left(y\in Y,\; f\in C(X)\right),
$$
with $\lambda$, $\phi$ and $\mu$ being as in the statement of Theorem \ref{IsoLipWeaver}. Using this representation, it is proven easily that $T$ is surjective. 

Our approach is in the line of the proof of the known Holszty\'{n}ski theorem \cite{Hol-66} which provides a Banach--Stone type representation for non-surjective linear isometries of $C(X)$ with the supremun norm. However, the adaptation of the Holszty\'{n}ski's method to the setting of diameter-preserving linear maps is far from being immediate due to the representation of the diameter-preserving linear bijections from $C(X)$ to $C(Y)$ as sum of a weighted composition operator from $C(X)$ to $C(Y)$ and a linear functional on $C(X)$.

We shall apply the known Gleason--Kahane--Zelazko theorem \cite{Gle-67, KahZel-68, Zel-68} to prove our main result. A similar strategy was used in the study of local isometries between complex-valued Lipschitz algebras \cite{JimMorVil-10} or uniform algebras \cite{CabMol-02}. Recently, Li, Peralta, L. Wang and Y.-S. Wang \cite{LiPerWanWan-19} established a spherical variant of the Gleason--Kahane--Zelazko theorem to analyse weak-local isometries on uniform algebras and Lipschitz algebras.

\section{Proof of Theorem \ref{main}}

Before proving our result, we fix some notation and recall the existence of certain peaking functions. Given a set $X$ with cardinal number $|X|\geq 2$, we denote 
\begin{align*}
\widetilde{X}&=\left\{(x_1,x_2)\in X\times X\colon x_1\neq x_2\right\},\\
X_2&=\left\{\{x_1,x_2\}\colon (x_1,x_2)\in\widetilde{X}\right\}.
\end{align*}
As usual, $\mathbb{T}$ stands for the set of all unimodular complex numbers. We also denote 
$$
\mathbb{T}^+=\left\{e^{it}\colon t\in [0,\pi[\right\}.
$$

An application of Urysohn's lemma 
shows that if $X$ is a first countable compact Hausdorff space and $(x_1,x_2)\in\widetilde{X}$, then there exists a continuous function $h_{(x_1,x_2)}\colon X\to [0,1]$ with $h_{(x_1,x_2)}^{-1}(\{1\})=\{x_1\}$ and $h_{(x_1,x_2)}^{-1}(\{0\})=\{x_2\}$, 
and hence 
$$
h_{(x_1,x_2)}(x_1)-h_{(x_1,x_2)}(x_2)=1=\diam(h_{(x_1,x_2)})
$$
and 
$$
\left\{(x,y)\in\widetilde{X}\colon h_{(x_1,x_2)}(x)-h_{(x_1,x_2)}(y)=1\right\}=\left\{(x_1,x_2)\right\}.
$$
Therefore, given a first countable compact Hausdorff space $X$ and any $\{x_1,x_2\}\in X_2$, we may consider the nonempty sets:
\begin{align*}
\mathcal{F}_{\{x_1,x_2\}}&=\left\{f\in C(X)\colon \left|f(x_1)-f(x_2)\right|=1=\diam(f)\right\},\\
\mathcal{F'}_{\{x_1,x_2\}}&=\left\{f\in\mathcal{F}_{\{x_1,x_2\}}\colon \left\{\{x,y\}\in X_2\colon \left|f(x)-f(y)\right|=1\}=\{\{x_1,x_2\}\right\}\right\}.
\end{align*}

We should note that,  since the range of a local diameter-preserving linear map is a subspace without any additional (separating) property, the  standard reasoning  does not work here in  some steps of the proof. Indeed,  we need the next two lemmas. The first one provides some functions in  $\mathcal{F}'_{\{x_1,x_2\}}$ satisfying an additional condition and the second one shows, in particular,  that $C(X)$ is  the linear span of $\bigcup_{\{x_1,x_2\}\in X_2} \mathcal{F}_{\{x_1,x_2\}}$.  

\begin{lemma}\label{0}
Let $X$ be a first countable compact Hausdorff space and let $x_1,x_2,x_3,x_4$ be pairwise distinct points in $X$. Then there exists a function $f\in\mathcal{F}'_{\{x_1,x_2\}}$ for which $f(x_3)=f(x_4)$.
\end{lemma}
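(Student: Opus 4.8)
The plan is to produce $f$ by an explicit elementary construction, staying within the real-valued functions in $C(X)$ with values in $[0,1]$. The only ingredients are the peaking functions $h_{(a,b)}$ recalled above and the fact that a finite product of $[0,1]$-valued continuous functions equals $1$ precisely where every factor equals $1$ and vanishes precisely on the union of the factors' zero sets. I would arrange $f$ to attain the value $1$ only at $x_1$, the value $0$ only at $x_2$, and a common intermediate value at $x_3$ and $x_4$; any such $f$ then automatically lies in $\mathcal{F}'_{\{x_1,x_2\}}$, as explained below.

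Concretely, first I would set
$$
r=\prod_{j\in\{2,3,4\}}h_{(x_1,x_j)},\qquad s=1-\prod_{j\in\{1,3,4\}}h_{(x_2,x_j)}.
$$
By the product remark, $r$ takes values in $[0,1]$ with $r^{-1}(\{1\})=\{x_1\}$ and $r(x_2)=r(x_3)=r(x_4)=0$, while $s$ takes values in $[0,1]$ with $s^{-1}(\{0\})=\{x_2\}$ and $s(x_1)=s(x_3)=s(x_4)=1$. Then I would define $f=\tfrac12(r+s)$, so that $f(X)\subseteq[0,1]$, $f(x_1)=1$, $f(x_2)=0$ and $f(x_3)=f(x_4)=\tfrac12$. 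Moreover $f=1$ forces $r=s=1$, whence $x=x_1$, and $f=0$ forces $r=s=0$, whence $x=x_2$; thus $f^{-1}(\{1\})=\{x_1\}$ and $f^{-1}(\{0\})=\{x_2\}$.

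To finish I would check that $f\in\mathcal{F}'_{\{x_1,x_2\}}$ exactly as one does for $h_{(x_1,x_2)}$: since $f$ is real with $f(X)\subseteq[0,1]$, an equality $\left|f(x)-f(y)\right|=1$ forces $\{f(x),f(y)\}=\{0,1\}$ and hence $\{x,y\}=\{x_1,x_2\}$, so $\diam(f)=1$ is realized only by this pair. Combined with $f(x_3)=f(x_4)$, this yields the claim. I expect the one delicate point to be precisely this uniqueness requirement: one must relocate $x_3$ and $x_4$ to a common value without creating a second pair at distance $1$, and it is the product construction---keeping the peak set $\{x_1\}$ and the zero set $\{x_2\}$ singletons---that guarantees this.
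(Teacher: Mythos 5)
Your proof is correct, and it takes a genuinely different route from the paper's. The paper argues by \emph{correction}: it starts from an arbitrary $f_0\in\mathcal{F}'_{\{x_1,x_2\}}$ with $f_0^{-1}(\{1\})=\{x_2\}$ and $f_0^{-1}(\{0\})=\{x_1\}$ and, in the nontrivial case $f_0(x_3)=a\neq b=f_0(x_4)$, multiplies $f_0$ by a strictly positive weight $g=e^{g_0}\in(0,1]$ built from two bump functions supported in disjoint neighbourhoods of $x_3$ and $x_4$, chosen so that $g(x_3)=b$, $g(x_4)=a$ and $g(x_2)=1$; the product $f=f_0g$ then satisfies $f(x_3)=f(x_4)=ab$, and the strict positivity of $g$ (respectively $g\le 1$) preserves the zero set (respectively the peak set), whence $f\in\mathcal{F}'_{\{x_1,x_2\}}$. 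You instead \emph{synthesize} $f$ in one shot as the average of two products of the peaking functions $h_{(\cdot,\cdot)}$, with the level sets $f^{-1}(\{1\})=\{x_1\}$ and $f^{-1}(\{0\})=\{x_2\}$ controlled combinatorially by the product rule you state; I checked your level-set computations for $r$ and $s$ and they are right, and your closing observation --- that for a $[0,1]$-valued function, membership in $\mathcal{F}'_{\{x_1,x_2\}}$ reduces exactly to these two level-set conditions, since $\left|f(x)-f(y)\right|=1$ forces $\{f(x),f(y)\}=\{0,1\}$ --- is the same mechanism the paper uses in its final step. Your construction dispenses with the paper's case distinction ($f_0(x_3)=f_0(x_4)$ or not), the choice of disjoint neighbourhoods, and the exponential trick, and it generalizes verbatim to equalizing $f$ at any finite set of points disjoint from $\{x_1,x_2\}$ (all sent to the common value $\tfrac12$); what the paper's multiplicative method buys in exchange is that it modifies a \emph{given} member of $\mathcal{F}'_{\{x_1,x_2\}}$ rather than prescribing one, keeping the equalized values tied to the original ones (namely $ab$), though this extra flexibility is not needed elsewhere in the paper. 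Both arguments rest on the same topological input: first countability makes points $G_\delta$, which is what guarantees the Urysohn-type functions $h_{(a,b)}$ with exact level sets that you multiply together.
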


\begin{proof}
We construct the function $f$ in several stages:

1. Choose $f_0\in \mathcal{F}'_{\{x_1,x_2\}}$ with values in $[0,1]$ such that $f_0^{-1}(\{1\})=\{x_2\}$ and $f_0^{-1}(\{0\})=\{x_1\}$. 
If $f_0(x_3)=f_0(x_4)$, then $f_0$ is the desired function. 
So we assume that $f_0(x_3)\neq f_0(x_4)$. Put $a=f_0(x_3)$ and $b=f_0(x_4)$, and assume without loss of generality that $a<b$. Clearly, $0<a<b<1$. 

2. Let $U$ and $V$ be neighbourhoods of $x_3$ and $x_4$, respectively, with $\overline{U}\cap\overline{V}=\emptyset$ and $x_2\notin\overline{U}\cup \overline{V}$. Choose $g_0\in C(X)$ satisfying $g_0\le 0$, $g_0(x_3)=\ln(b)$, $g_0(x_4)=\ln(a)$ and $g_0=0$ on $X\backslash (U\cup V)$. 
For such a function it suffices to take $h_0,h_1\in C(X)$ with values in $[0,1]$ such that $h_0(x_3)=1$ and ${\rm supp}(h_0)\subseteq U$ and similarly 
$h_1(x_4)=1$ and ${\rm supp}(h_1)\subseteq V$. Then $g_0=\ln(b)h_0+\ln(a)h_1$ has the desired properties.

3. Put $g=e^{g_0}$. Since $g_0\le 0$, we have $0<g\le 1$. Clearly, $g(x_3)=b$, $g(x_4)=a$ and $g(x_2)=1$. 

4. Take $f=f_0 g$. Then $f(x_2)=1$, $f(x_1)=0$ and 
$$
f(x_3)=f_0(x_3)g(x_3)=ab=f_0(x_4)g(x_4)=f(x_4).
$$
Moreover, for any $x\in X$, we have $f(x)=1$ only if $f_0(x)=1$, i.e., $x=x_2$. Similarly, $f(x)=0$ if and only if $f_0(x)=0$, i.e., $x=x_1$. Hence $0<f(x)<1$ for all $x\notin \{x_1,x_2\}$. This implies that $f\in \mathcal{F}'_{\{x_1,x_2\}}$ and this completes the proof. 
\end{proof}
\begin{lemma} \cite[Lemma 2.1(i)]{JamSad-16}\label{00}
Let $X$ be a compact Hausdorff space and $x_1,x_2\in X$ be distinct. If $f\in C(X)$ such that  $0\le f\le 1$ and $f(x_1)=f(x_2)$, then there exists a function $g\in C(X)$ such that both $g$ and $h:=\frac{1}{2}f+g$ satisfy  $g(x_1)-g(x_2)=1={\rm diam}(g)$ and   $h(x_1)-h(x_2)=1={\rm diam}(h)$. In particular, we have  $g,h\in F_{\{x_1,x_2\}}$. 
\end{lemma}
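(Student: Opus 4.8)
The plan is to exploit the fact that adding $\tfrac12 f$ does not alter the relevant difference at the two distinguished points. Writing $c := f(x_1) = f(x_2) \in [0,1]$, for any $g \in C(X)$ one has $h(x_1) - h(x_2) = \tfrac12\bigl(f(x_1) - f(x_2)\bigr) + \bigl(g(x_1) - g(x_2)\bigr) = g(x_1) - g(x_2)$. Hence it suffices to construct a real-valued $g$ with $g(x_1) - g(x_2) = 1$: the difference conditions at $\{x_1,x_2\}$ for both $g$ and $h$ then hold automatically. Moreover, since any real-valued $\psi\in C(X)$ satisfies $\diam(\psi) = \max\psi - \min\psi \ge |\psi(x_1) - \psi(x_2)|$, applying this to $\psi = g$ and $\psi = h$ (each having $|\psi(x_1) - \psi(x_2)| = 1$) shows that the only remaining task is to squeeze both $g$ and $h$ into a band of width one, with the extreme values attained precisely at $x_1$ and $x_2$.

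Normalizing $g(x_2) = 0$ and $g(x_1) = 1$, I would translate the two diameter requirements into pointwise bounds: the condition $\diam(g) = 1$ becomes $0 \le g \le 1$, while $\diam(h) = 1$ becomes $\tfrac12 c \le \tfrac12 f(x) + g(x) \le \tfrac12 c + 1$, i.e. $\tfrac12(c - f(x)) \le g(x) \le 1 + \tfrac12(c - f(x))$. Intersecting the two bands leads to the continuous envelope functions
\[
L := \max\!\left(0, \tfrac12(c - f)\right), \qquad U := \min\!\left(1, 1 + \tfrac12(c - f)\right),
\]
and the problem reduces to producing a continuous $g$ with $L \le g \le U$ and the prescribed boundary values at $x_1, x_2$.

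The crucial point, and the step I expect to carry the weight of the argument, is that this admissible band never collapses. Writing $t := \tfrac12(c - f)$, the hypothesis $0 \le f \le 1$ gives $t \in [-\tfrac12, \tfrac12]$, and a direct computation yields $U - L = 1 - |t| \ge \tfrac12 > 0$, so $L \le U$ on all of $X$. Since $f(x_1) = f(x_2) = c$ forces $L(x_1) = L(x_2) = 0$ and $U(x_1) = U(x_2) = 1$, the desired boundary values $g(x_1) = 1$ and $g(x_2) = 0$ lie exactly on the envelope. I would then invoke Urysohn's lemma to choose $w \in C(X)$ with $0 \le w \le 1$, $w(x_1) = 1$ and $w(x_2) = 0$, and set $g := (1 - w)L + w U$. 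As a pointwise convex combination of $L$ and $U$, this $g$ is continuous with $L \le g \le U$, whence $0 \le g \le 1$ and $\tfrac12 c \le \tfrac12 f + g \le \tfrac12 c + 1$, while the endpoints give $g(x_1) = U(x_1) = 1$ and $g(x_2) = L(x_2) = 0$. Therefore $\diam(g) = g(x_1) - g(x_2) = 1$ and $\diam(h) = h(x_1) - h(x_2) = 1$, and the membership $g, h \in \mathcal{F}_{\{x_1,x_2\}}$ follows at once from the definitions. The sole genuine obstacle is the simultaneous control of the two diameters; once the nonemptiness of the common band $[L, U]$ is secured, the continuous selection furnished by Urysohn interpolation completes the construction.
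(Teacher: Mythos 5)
Your proposal is correct, and every step checks out: the identity $h(x_1)-h(x_2)=g(x_1)-g(x_2)$ from $f(x_1)=f(x_2)$, the translation of the two diameter conditions into the band $L\le g\le U$ with $L=\max\bigl(0,\tfrac12(c-f)\bigr)$ and $U=\min\bigl(1,1+\tfrac12(c-f)\bigr)$, the width estimate $U-L=1-|t|\ge\tfrac12$ for $t=\tfrac12(c-f)\in[-\tfrac12,\tfrac12]$, and the Urysohn interpolation $g=(1-w)L+wU$ (legitimate since compact Hausdorff spaces are normal, and $L,U$ are continuous as a max and a min of continuous functions). Note for the comparison that the paper does not prove this lemma at all: it imports it verbatim from \cite[Lemma 2.1(i)]{JamSad-16}, so you have supplied a self-contained argument where the paper offers only a citation. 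Your route is in a slightly different spirit from the constructive devices the paper does use nearby (for instance, the proof of Lemma \ref{0} fixes an initial Urysohn function and corrects it multiplicatively by an exponential factor $e^{g_0}$); your band-and-selection scheme is arguably cleaner here because it handles both constraints, $\diam(g)=1$ and $\diam\bigl(\tfrac12 f+g\bigr)=1$, simultaneously and makes the non-collapse of the admissible region (the only genuine obstruction, exactly as you identified) completely explicit, where the role of the factor $\tfrac12$ in $h=\tfrac12 f+g$ is laid bare in the inequality $U-L\ge\tfrac12$. Two small points that your write-up handles implicitly but correctly: the hypothesis $0\le f\le 1$ forces $f$ to be real-valued, so for the real functions $g$ and $h$ one indeed has $\diam(\psi)=\max\psi-\min\psi$ even though the ambient space is complex $C(X)$; and since $g(x_1)=1$, $g(x_2)=0$ give the \emph{signed} difference $g(x_1)-g(x_2)=1$ required by the statement, the memberships $g,h\in\mathcal{F}_{\{x_1,x_2\}}$ follow as you say.
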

Let $T$ be a local diameter-preserving linear map from $C(X)$ to $C(Y)$. We have divided the proof of Theorem \ref{main} in several steps. 

\begin{cs}\label{Step 1}
$T$ is diameter-preserving.
\end{cs}

\begin{proof}
Let $f\in C(X)$. By hypothesis, there is a diameter-preserving linear bijection $T_{f}$ from $C(X)$ to $C(Y)$ such that $T(f)=T_{f}(f)$. Hence $\diam(T(f))=\diam(T_{f}(f))=\diam(f)$. 
\end{proof}

\begin{cs}\label{Step 2}
For every $f\in C(X)$, there exist a homeomorphism $\phi_f\colon Y\to X$, a linear functional $\mu_f$ on $C(X)$ and a number $\lambda_f\in\mathbb{T}$ with $\lambda_f\neq -\mu_f(1_X)$ such that 
$$
T(f)(y)=\lambda_f f(\phi_f(y))+\mu_f(f)
$$
for all $y\in Y$.
\end{cs}

\begin{proof}
It follows immediately from Definition \ref{def:local diameter-preserving map} and Theorem \ref{IsoLipWeaver}. 
\end{proof}

Step \ref{Step 2} will be frequently applied without any explicit mention along the paper. By Step \ref{Step 2}, there exists a homeomorphism from $Y$ onto $X$. Hence $|Y|=|X|$. Since Theorem \ref{main} is easy to verify when $|Y|=1$, we shall suppose $|Y|\geq 2$ from now on.

\begin{cs}\label{Step 3}
For every $(x_1,x_2)\in\widetilde{X}$, the set
$$ 
\mathcal{B}_{(x_1,x_2)}=\bigcap_{f\in\mathcal{F}_{\{x_1,x_2\}}}\mathcal{B}_{(x_1,x_2),f}
$$
is nonempty, where
$$
\mathcal{B}_{(x_1,x_2),f}=\left\{((y_1,y_2),\lambda)\in\widetilde{Y}\times \mathbb{T}\colon T(f)(y_1)-T(f)(y_2)=\lambda\left(f(x_1)-f(x_2)\right)\right\}\qquad \left(f\in\mathcal{F}_{\{x_1,x_2\}}\right).
$$
\end{cs}

\begin{proof}
Let $(x_1,x_2)\in\widetilde{X}$. We shall first prove that for each $f\in\mathcal{F}_{\{x_1,x_2\}}$, the set $\mathcal{B}_{(x_1,x_2),f}$ is a nonempty closed subset of $\widetilde{Y}\times \mathbb{T}$. Fix $f\in\mathcal{F}_{\{x_1,x_2\}}$ and take $y_1,y_2\in Y$ such that $\phi_f(y_1)=x_1$ and $\phi_f(y_2)=x_2$. Clearly, $y_1\neq y_2$. We have   
$$
T(f)(y_1)-T(f)(y_2)
=\lambda_f\left(f(\phi_f(y_1))-f(\phi_f(y_2))\right)
=\lambda_f\left(f(x_1)-f(x_2)\right),
$$
and thus $((y_1,y_2),\lambda_f)\in\mathcal{B}_{(x_1,x_2),f}$. Therefore $\mathcal{B}_{(x_1,x_2),f}$ is nonempty, and to prove that it is closed in $\widetilde{Y}\times\mathbb{T}$, assume that $\{((y_i,z_i),\lambda_i)\}_{i\in I}$ is a net in $\mathcal{B}_{(x_1,x_2),f}$ converging to $((y_1,y_2),\lambda)$ in $\widetilde{Y}\times\mathbb{T}$ equipped with the product topology. We have 
$$
T(f)(y_i)-T(f)(z_i)=\lambda_i\left(f(x_1)-f(x_2)\right)
$$
for all $i\in I$. Since $T(f)\in C(Y)$, we infer that 
$$
T(f)(y_1)-T(f)(y_2)=\lambda\left(f(x_1)-f(x_2)\right),
$$
and thus $((y_1,y_2),\lambda)\in\mathcal{B}_{(x_1,x_2),f}$. A similar reasoning shows that $\mathcal{B}_{(x_1,x_2),f}$ is a nonempty closed subset of $Y^2\times \mathbb{T}$.

We shall next prove that the family $\left\{\mathcal{B}_{(x_1,x_2),f}\colon f\in\mathcal{F}_{\{x_1,x_2\}}\right\}$ has the finite intersection property. Let $n\in\mathbb{N}$ and $f_1,\ldots,f_n\in\mathcal{F}_{\{x_1,x_2\}}$. Define the function $g\colon X\to\mathbb{C}$ by 
$$
g(x)=\frac{1}{n}\sum_{i=1}^n(\overline{f_i(x_1)-f_i(x_2)})f_i(x).
$$
It is clear that $g\in C(X)$ with $g(x_1)-g(x_2)=1$. By Step \ref{Step 2}, consider $\lambda_g\in \mathbb{T}$ and take $y_1,y_2\in Y$ such that $\phi_g(y_1)=x_1$ and $\phi_g(y_2)=x_2$. Clearly, $y_1\neq y_2$. We have
$$
T(g)(y_1)-T(g)(y_2)=\lambda_g\left(g(\phi_g(y_1))-g(\phi_g(y_2))\right)=\lambda_g\left(g(x_1)-g(x_2)\right)=\lambda_g.
$$
Using the linearity of $T$, we can write 
$$
\lambda_g=T(g)(y_1)-T(g)(y_2)=\frac{1}{n}\sum_{i=1}^n(\overline{f_i(x_1)-f_i(x_2)})\left(T(f_i)(y_1)-T(f_i)(y_2)\right).
$$
By Step \ref{Step 1}, note that 
\begin{align*}
\left|(\overline{f_i(x_1)-f_i(x_2)})(T(f_i)(y_1)-T(f_i)(y_2))\right|&=\left|T(f_i)(y_1)-T(f_i)(y_2)\right|\\
                                                                    &\leq\diam(T(f_i))=\diam(f_i)=1
\end{align*}
for every $i\in\{1,\ldots,n\}$. By the strict convexity of $\mathbb{C}$, it follows that 
$$
T(f_i)(y_1)-T(f_i)(y_2)=\lambda_g\left(f_i(x_1)-f_i(x_2)\right)
$$
for all $i\in\{1,\ldots,n\}$, and thus $((y_1,y_2),\lambda_g)\in\cap_{i=1}^n\mathcal{B}_{(x_1,x_2),f_i}$, as desired.

Hence $\left\{\mathcal{B}_{(x_1,x_2),f}\colon f\in \mathcal{F}_{\{x_1,x_2\}}\right\}$ is a family of closed subsets of the compact space $Y^2\times \mathbb{T}$ with the finite intersection property. Therefore there exists $((y_1,y_2),\lambda)\in Y^2\times \mathbb{T}$ such that 
$$
T(f)(y_1)-T(f)(y_2)=\lambda\left(f(x_1)-f(x_2)\right)
$$
for any $f\in\mathcal{F}_{\{x_1,x_2\}}$. This implies $y_1\neq y_2$ and thus $((y_1,y_2),\lambda)\in\mathcal{B}_{(x_1,x_2)}$.
\end{proof}

\begin{cs}\label{Step 3'5} 
For every $(x_1,x_2)\in\widetilde{X}$, there exist $(y_1,y_2)\in\widetilde{Y}$ and $\lambda\in\mathbb{T}^+$ such that 
$$
\mathcal{B}_{(x_1,x_2)}=\left\{((y_1,y_2),\lambda),((y_2,y_1),-\lambda)\right\}.
$$
\end{cs}

\begin{proof}
Let $(x_1,x_2)\in\widetilde{X}$. By Step \ref{Step 3}, the set $\mathcal{B}_{(x_1,x_2)}$ is nonempty. Hence we can take an element $((y_1,y_2),\lambda)\in\mathcal{B}_{(x_1,x_2)}$. Note that $((y_2,y_1),-\lambda)\in\mathcal{B}_{(x_1,x_2)}$. Let $((z_1,z_2),\beta)\in\mathcal{B}_{(x_1,x_2)}$ be arbitrary. We have
\begin{align*}
T(f)(y_1)-T(f)(y_2)&=\lambda\left(f(x_1)-f(x_2)\right),\\
T(f)(z_1)-T(f)(z_2)&=\beta\left(f(x_1)-f(x_2)\right),
\end{align*}
for all $f\in\mathcal{F}_{\{x_1,x_2\}}$. Fix any $f\in\mathcal{F'}_{\{x_1,x_2\}}$. Using Step \ref{Step 2}, we deduce 
\begin{align*}
\lambda_f\left(f(\phi_f(y_1))-f(\phi_f(y_2))\right)&=\lambda\left(f(x_1)-f(x_2)\right),\\
\lambda_f\left(f(\phi_f(z_1))-f(\phi_f(z_2))\right)&=\beta\left(f(x_1)-f(x_2)\right).
\end{align*}
Since $f\in\mathcal{F'}_{\{x_1,x_2\}}$ and  
$$
\left|f(\phi_f(y_1))-f(\phi_f(y_2))\right|=\left|f(\phi_f(z_1))-f(\phi_f(z_2))\right|=1,
$$
we derive from above that  
$$
\left\{(\phi_f(y_1),\phi_f(y_2)),(\phi_f(z_1),\phi_f(z_2))\right\}\subseteq\left\{(x_1,x_2),(x_2,x_1)\right\}.
$$
We have four possibilities:
\begin{enumerate}
\item $x_1=\phi_f(y_1),\; x_2=\phi_f(y_2),\; x_1=\phi_f(z_1),\; x_2=\phi_f(z_2)$.
\item $x_1=\phi_f(y_1),\; x_2=\phi_f(y_2),\; x_1=\phi_f(z_2),\; x_2=\phi_f(z_1)$.
\item $x_1=\phi_f(y_2),\; x_2=\phi_f(y_1),\; x_1=\phi_f(z_2),\; x_2=\phi_f(z_1)$.
\item $x_1=\phi_f(y_2),\; x_2=\phi_f(y_1),\; x_1=\phi_f(z_1),\; x_2=\phi_f(z_2)$.
\end{enumerate}
Using the injectivity of $\phi_f$, we deduce that 
$$
((z_1,z_2),\beta)\in\left\{((y_1,y_2),\lambda),((y_2,y_1),-\lambda)\right\}.
$$
Therefore 
$$
\mathcal{B}_{(x_1,x_2)}=\left\{((y_1,y_2),\lambda),((y_2,y_1),-\lambda)\right\}.
$$
Finally, notice that either $\lambda\in \mathbb{T}^+$ or $-\lambda\in \mathbb{T}^+$.
\end{proof}

\begin{cs}\label{Step 4} 
For every $(x_1,x_2)\in \widetilde{X}$, the set 
$$
\mathcal{A}_{(x_1,x_2)}=\left\{(y_1,y_2)\in\widetilde{Y}\,|\,  \exists \lambda\in \mathbb{T}^+ \colon ((y_1,y_2),\lambda)\in\mathcal{B}_{(x_1,x_2)}\right\}
$$
is a singleton. Let $\Gamma\colon\widetilde{X}\to\widetilde{Y}$ be the map given by
$$
\left\{\Gamma(x_1,x_2)\right\}=\mathcal{A}_{(x_1,x_2)}.
$$
Furthermore, $(y_2,y_1)=\Gamma(x_2,x_1)$ if $(y_1,y_2)=\Gamma(x_1,x_2)$. 
\end{cs}

\begin{proof}
Given $(x_1,x_2)\in \widetilde{X}$, the set $\mathcal{A}_{(x_1,x_2)}$ is a singleton by Step \ref{Step 3'5}, say $\mathcal{A}_{(x_1,x_2)}=\left\{(y_1,y_2)\right\}$. Hence $\Gamma(x_1,x_2)=(y_1,y_2)\in \widetilde{Y}$. Let $(x_1,x_2),(x_3,x_4)\in \widetilde{X}$ be such that $(x_1,x_2)=(x_3,x_4)$. Let $\Gamma(x_1,x_2)=(y_1,y_2)\in \widetilde{Y}$. Hence $(y_1,y_2)\in\mathcal{A}_{(x_1,x_2)}$ and therefore there exists $\lambda\in \mathbb{T}^+$ such that $((y_1,y_2),\lambda)\in\mathcal{B}_{(x_1,x_2)}$. 
It follows that $((y_1,y_2),\lambda)\in\mathcal{B}_{(x_3,x_4)}$,  
hence $(y_1,y_2)\in\mathcal{A}_{(x_3,x_4)}$ and so $(y_1,y_2)=\Gamma(x_3,x_4)$. Consequently, $\Gamma(x_1,x_2)=\Gamma(x_3,x_4)$. This justifies that the map $\Gamma\colon \widetilde{X}\to\widetilde{Y}$ is well-defined.

For the last statement, let $(y_1,y_2)=\Gamma(x_1,x_2)$. Then $(y_1,y_2)\in\mathcal{A}_{(x_1,x_2)}$ and therefore there exists $\lambda\in \mathbb{T}^+$ such that $((y_1,y_2),\lambda)\in\mathcal{B}_{(x_1,x_2)}$. It follows that $((y_2,y_1),\lambda)\in\mathcal{B}_{(x_2,x_1)}$, hence $(y_2,y_1)\in\mathcal{A}_{(x_2,x_1)}$ and thus $(y_2,y_1)=\Gamma(x_2,x_1)$, as required. 
\end{proof}

\begin{cs}\label{Step 4'5} 
If $(x_1,x_2)\in \widetilde{X}$ and $(y_1,y_2)=\Gamma(x_1,x_2)$, then  
$$
T(f)(y_1)=T(f)(y_2)
$$
for all $f\in C(X)$ such that $f(x_1)=f(x_2)$. 
\end{cs}

\begin{proof}
Let $(x_1,x_2)\in \widetilde{X}$ and $(y_1,y_2)=\Gamma(x_1,x_2)$. By Step \ref{Step 4}, there is a $\beta(x_1,x_2)\in \mathbb{T}^+$ such that 
$$
T(h)(y_1)-T(h)(y_2)=\beta(x_1,x_2)\left(h(x_1)-h(x_2)\right)
$$
for all $h\in\mathcal{F}_{\{x_1,x_2\}}$. Let $f$ be in $C(X)$ with $f(x_1)=f(x_2)$. Assume first that $0\leq f\leq 1$. By Lemma \ref{00}, we can take a function $g\in\mathcal{F}_{\{x_1,x_2\}}$ such that $(1/2)f+g\in\mathcal{F}_{\{x_1,x_2\}}$. Therefore 
$$
T\left(\frac{1}{2}f+g\right)(y_1)-T\left(\frac{1}{2}f+g\right)(y_2)=\beta(x_1,x_2)\left(\left(\frac{1}{2}f+g\right)(x_1)-\left(\frac{1}{2}f+g\right)(x_2)\right).
$$
Using the linearity of $T$ and the equality  
$$
T(g)(y_1)-T(g)(y_2)=\beta(x_1,x_2)\left(g(x_1)-g(x_2)\right),
$$
we get  
$$
T(f)(y_1)=T(f)(y_2).
$$ 
If $f$ is arbitrary, consider the decomposition 
$$
f=(\Re f)^+-(\Re f)^-+i[(\Im f)^+-(\Im f)^-],
$$
apply the previous case to each one of the four functions of the decomposition $f/(1+||f||_\infty)$, and the same conclusion is achieved by using the linearity of $T$.
\end{proof}

\begin{cs}\label{Step 5} 
For every $(x_1,x_2)\in\widetilde{X}$, there exists a number $\lambda(x_1,x_2)\in \mathbb{T}^+$ such that
$$
T(f)(y_1)-T(f)(y_2)=\lambda(x_1,x_2)\left(f(x_1)-f(x_2)\right)                                             
$$
for all $f\in C(X)$, where $(y_1,y_2)=\Gamma(x_1,x_2)$. Furthermore, $\lambda(x_1,x_2)=\lambda(x_2,x_1)$.
\end{cs}

\begin{proof}
Let $(x_1,x_2)\in\widetilde{X}$ and $(y_1,y_2)=\Gamma(x_1,x_2)$. Let $\lambda(x_1,x_2)$ be the number given by  
$$
\lambda(x_1,x_2)=T(f)(y_1)-T(f)(y_2),                                             
$$
where $f$ is any function in $C(X)$ which satisfies $f(x_1)-f(x_2)=1$. The number $\lambda(x_1,x_2)$ does not depend on such a function $f$ by Step \ref{Step 4'5}, and it is well-defined. Using the homogeneity of $T$, we may deduce easily that  
$$
T(f)(y_1)-T(f)(y_2)=\lambda(x_1,x_2)\left(f(x_1)-f(x_2)\right)                                             
$$
for all $f\in C(X)$. 

Since $(y_1,y_2)=\Gamma(x_1,x_2)$, Step \ref{Step 4} gives a $\lambda\in \mathbb{T}^+$ such that 
$$
T(f)(y_1)-T(f)(y_2)=\lambda\left(f(x_1)-f(x_2)\right)                                             
$$
for all $f\in\mathcal{F}_{\{x_1,x_2\}}$. In particular, taking $f=h_{(x_1,x_2)}$ yields
$$
\lambda(x_1,x_2)=T(h_{(x_1,x_2)})(y_1)-T(h_{(x_1,x_2)})(y_2)=\lambda ,
$$
and so $\lambda(x_1,x_2)\in \mathbb{T}^+$.   

Similarly, since $(y_2,y_1)=\Gamma(x_2,x_1)$ by Step \ref{Step 4}, we have 
$$
T(f)(y_2)-T(f)(y_1)=\lambda(x_2,x_1)\left(f(x_2)-f(x_1)\right)                                             
$$
for all $f\in C(X)$. Combining the equations obtained, we infer that 
\begin{align*}
\lambda(x_1,x_2)\left(f(x_1)-f(x_2)\right)&=T(f)(y_1)-T(f)(y_2)\\
                                          &=-\left(T(f)(y_2)-T(f)(y_1)\right)\\
                                          &=-\lambda(x_2,x_1)\left(f(x_2)-f(x_1)\right)\\
                                          &=\lambda(x_2,x_1)\left(f(x_1)-f(x_2)\right)
\end{align*}
for all $f\in C(X)$, and taking $f=h_{(x_1,x_2)}$ yields $\lambda(x_1,x_2)=\lambda(x_2,x_1)$. 
\end{proof}

\begin{cs}\label{Step 6}
The map $\Gamma$ is a bijection from $\widetilde{X}$ to $\cup_{(x_1,x_2)\in \widetilde{X}}\mathcal{A}_{(x_1,x_2)}$.
\end{cs}

\begin{proof}
Let $(y_1,y_2)\in\cup_{(x_1,x_2)\in \widetilde{X}}\mathcal{A}_{(x_1,x_2)}$. Then $(y_1,y_2)\in\mathcal{A}_{(x_1,x_2)}$ for some $(x_1,x_2)\in \widetilde{X}$. By Step \ref{Step 4}, $\mathcal{A}_{(x_1,x_2)}=\left\{(y_1,y_2)\right\}$, and thus $\Gamma(x_1,x_2)=(y_1,y_2)$ by the definition of $\Gamma$. This proves the surjectivity of $\Gamma$. 

To prove its injectivity, let $(x_1,x_2),(x_3,x_4)\in \widetilde{X}$ be such that
$$
(y_1,y_2)=\Gamma(x_1,x_2)=\Gamma(x_3,x_4),
$$
where $(y_1,y_2)\in \cup_{(x_1,x_2)\in \widetilde{X}}\mathcal{A}_{(x_1,x_2)}$. By Step \ref{Step 5}, we have 
$$
\lambda(x_1,x_2)\left(f(x_1)-f(x_2)\right)=T(f)(y_1)-T(f)(y_2)=\lambda(x_3,x_4)\left(f(x_3)-f(x_4)\right)                                
$$
for all $f\in C(X)$, with $\lambda(x_1,x_2),\lambda(x_3,x_4)\in \mathbb{T}^+$. Substituting any function $f\in\mathcal{F}'_{\{x_1,x_2\}}$, we deduce that $\{x_3,x_4\}=\{x_1,x_2\}$. This implies that either $(x_1,x_2)=(x_4,x_3)$ or $(x_1,x_2)=(x_3,x_4)$. In the former case, we would have 
$$
\lambda(x_1,x_2)\left(f(x_1)-f(x_2)\right)=\lambda(x_3,x_4)\left(f(x_2)-f(x_1)\right)=-\lambda(x_3,x_4)\left(f(x_1)-f(x_2)\right)                               
$$
for all $f\in C(X)$. In particular, for $f=h_{(x_1,x_2)}$ we would obtain $\lambda(x_1,x_2)=-\lambda(x_3,x_4)$, which is impossible. Therefore $(x_1,x_2)=(x_3,x_4)$. 
\end{proof}

\begin{cs}\label{Step 7}
Let $(x_1,x_2),(x_3,x_4)\in\widetilde{X}$, $(y_1,y_2)=\Gamma(x_1,x_2)$ and $(y_3,y_4)=\Gamma(x_3,x_4)$. Then   
$$
\left|\{x_1,x_2\}\cap\{x_3,x_4\}\right|=\left|\{y_1,y_2\}\cap\{y_3,y_4\}\right|.
$$
With others words, if $\Lambda_X\colon\widetilde{X}\to X_2$ and $\Lambda_Y\colon\widetilde{Y}\to Y_2$ are the maps defined by $\Lambda_X(x_1,x_2)=\{x_1,x_2\}$ and $\Lambda_Y(y_1,y_2)=\{y_1,y_2\}$, respectively, we have 
$$
\left|\Lambda_X(x_1,x_2)\cap\Lambda_X(x_3,x_4)\right|=\left|\Lambda_Y(\Gamma(x_1,x_2))\cap\Lambda_Y(\Gamma(x_3,x_4))\right|
$$
for all $(x_1,x_2),(x_3,x_4)\in\widetilde{X}$.
\end{cs}

\begin{proof}
Firstly, assume $\left|\{x_1,x_2\}\cap\{x_3,x_4\}\right|=2$.
Then $(x_1,x_2)\in\{(x_3,x_4),(x_4,x_3)\}$, hence $(y_1,y_2)\in\{\Gamma(x_3,x_4),\Gamma(x_4,x_3)\}=\{(y_3,y_4),(y_4,y_3)\}$ and thus 
$$
\left|\{y_1,y_2\}\cap\{y_3,y_4\}\right|=2.
$$
Secondly, if $\left|\{x_1,x_2\}\cap\{x_3,x_4\}\right|=1$, then $\left|\{y_1,y_2\}\cap\{y_3,y_4\}\right|\leq 1$ by the injectivity of $\Gamma$, and therefore
$$
\left|\{y_1,y_2\}\cap\{y_3,y_4\}\right|=1.
$$
Indeed, we can assume without loss of generality that $x_1=x_3$ and $x_2\neq x_4$, and assume on the contrary that $\left|\{y_1,y_2\}\cap\{y_3,y_4\}\right|=0$. By Step \ref{Step 5}, we have the equations
\begin{align*}
T(f)(y_1)-T(f)(y_2)&=\lambda(x_1,x_2)(f(x_1)-f(x_2)),\\
T(f)(y_3)-T(f)(y_4)&=\lambda(x_1,x_4)(f(x_1)-f(x_4)),
\end{align*} 
for all $f\in C(X)$. Since the finite sets in a first countable compact space $X$ are $G_\delta$-sets, it is possible to take a continuous function $f\colon X\to [0,1]$ such that $f^{-1}(\{1\})=\{x_1\}$ and $f^{-1}(\{0\})=\{x_2,x_4\}$, and, consequently, 
$$
\left\{\{x,y\}\in X_2\colon |f(x)-f(y)|=1\}=\{\{x_1,x_2\},\{x_1,x_4\}\right\}.
$$
From the equations, it follows that  
\begin{align*}
\lambda_f\left(f(\phi_f(y_1))-f(\phi_f(y_2))\right)&=\lambda(x_1,x_2)(f(x_1)-f(x_2)),\\
\lambda_f\left(f(\phi_f(y_3))-f(\phi_f(y_4))\right)&=\lambda(x_1,x_4)(f(x_1)-f(x_4)),
\end{align*} 
which imply that $\{\phi_f(y_1),\phi_f(y_2)\},\{\phi_f(y_3),\phi_f(y_4)\}\in\left\{\{x_1,x_2\},\{x_1,x_4\}\right\}$. In any case, we deduce that $\phi_f(y_i)=x_1=\phi_f(y_j)$ for some $i\in\{1,2\}$ and $j\in\{3,4\}$ with $i\neq j$. Since $\phi_f$ is injective, we get $y_i=y_j$, a contradiction

Finally, assume $\left|\{x_1,x_2\}\cap\{x_3,x_4\}\right|=0$, then $\left|\{y_1,y_2\}\cap\{y_3,y_4\}\right|\leq 1$ by the injectivity of $\Gamma$, and we shall prove that 
$$
\left|\{y_1,y_2\}\cap\{y_3,y_4\}\right|=0.
$$
Assume on the contrary that $y_1=y_3$ and $y_2\neq y_4$ (in the other cases it is proved in a similar form). Then we have the two equations:
\begin{align*}
T(f)(y_1)-T(f)(y_2)&=\lambda(x_1,x_2)(f(x_1)-f(x_2)),\\
T(f)(y_1)-T(f)(y_4)&=\lambda(x_3,x_4)(f(x_3)-f(x_4)),
\end{align*} 
for all $f\in C(X)$. It follows that 
$$
T(f)(y_4)-T(f)(y_2)=\lambda(x_1,x_2)(f(x_1)-f(x_2))-\lambda(x_3,x_4)(f(x_3)-f(x_4))
$$
for all $f\in C(X)$. Since  $\{x_1,x_2\}\cap\{x_3,x_4\}=\emptyset$, Lemma \ref{0} provides a function $f\in\mathcal{F}'_{\{x_1,x_2\}}$ satisfying $f(x_3)=f(x_4)$. Hence we have 
$$
\lambda_f\left(f(\phi_f(y_4))-f(\phi_f(y_2))\right)
=\lambda(x_1,x_2)(f(x_1)-f(x_2)),
$$
which implies that $\{\phi_f(y_4),\phi_f(y_2)\}=\{x_1,x_2\}$. Using the first one of the above-mentioned equations, we also obtain $\{\phi_f(y_1),\phi_f(y_2)\}=\{x_1,x_2\}$. These equalities imply that $\phi_f(y_4)=\phi_f(y_1)$ and, since $\phi_f$ is injective, we get $y_4=y_1$, hence $y_4=y_3$, a contradiction.
\end{proof}

\begin{cs}\label{Step 8}
Assume $|X|\geq 3$. For each $x\in X$ and any $(x_1,x_2)\in \widetilde{X}$ with $x_1\neq x\neq x_2$, there exists a unique point, depending only on $x$ and denoted by $\varphi(x)$, in the intersection 
$$
\Lambda_Y(\Gamma(x,x_1))\cap\Lambda_Y(\Gamma(x,x_2)).
$$
The map $\varphi\colon X\to Y$ so defined is injective and $\{\varphi(x_1),\varphi(x_2)\}=\Lambda_Y(\Gamma(x_1,x_2))$ for every $(x_1,x_2)\in \widetilde{X}$. 
\end{cs}

\begin{proof}
Let $x\in X$ and let $x_1,x_2\in X$ be with $x_1\neq x_2$ and $x_1\neq x\neq x_2$. Let $y$ be the unique point of the set $\Lambda_Y(\Gamma(x,x_1))\cap\Lambda_Y(\Gamma(x,x_2))$ (see Step \ref{Step 7}). 

We claim that $y\in\Lambda_Y(\Gamma(x,x_3))$ for every $x_3\in X$ with $x_3\neq x$, what shows that $y$ does not depend on $x_1$ and $x_2$ and thus it depends only on $x$. Indeed, if $\left|X\right|=3$, this is obvious. Assume $\left|X\right|\geq 4$. Pick $x_3\in X\setminus\{x,x_1,x_2\}$ and suppose on the contrary that $y\notin\Lambda_Y(\Gamma(x,x_3))$. We can write $\Lambda_Y(\Gamma(x,x_1))=\{y,y_1\}$ and $\Lambda_Y(\Gamma(x,x_2))=\{y,y_2\}$ for some $y_1,y_2\in Y$ with $y_1\neq y\neq y_2$. In the light of Step \ref{Step 7}, we obtain $y_1\neq y_2$. 
Since the cardinal of both sets $\Lambda_Y(\Gamma(x,x_3))\cap\Lambda_Y(\Gamma(x,x_1))$ and $\Lambda_Y(\Gamma(x,x_3))\cap\Lambda_Y(\Gamma(x,x_2))$ is one, we deduce that $\Lambda_Y(\Gamma(x,x_3))=\{y_1,y_2\}$. This implies that $\Gamma(x,x_3)=(y_1,y_2)$ or $\Gamma(x,x_3)=(y_2,y_1)$. We shall only prove the first case and the other is similarly proven. Since $\lambda(x,x_3),\lambda(x,x_1),\lambda(x,x_2)\in\mathbb{T}^+$, an easy argument shows that  
\begin{align*}
\lambda(x,x_3)(f(x)-f(x_3))&=T(f)(y_1)-T(f)(y_2)\\
                           &=(T(f)(y_1)-T(f)(y))+(T(f)(y)-T(f)(y_2))\\
                           &=\lambda(x,x_1)(f(x)-f(x_1))+\lambda(x,x_2)(f(x)-f(x_2))
\end{align*}
for all $f\in C(X)$. Taking suitable functions $f\in C(X)$, we can deduce that 
$$
\lambda(x,x_3)=\lambda(x,x_1)=\lambda(x,x_2),
$$ 
and so $f(x)=f(x_1)+f(x_2)-f(x_3)$ for all $f\in C(X)$, which is impossible. This proves our claim.
 
We shall next prove the injectivity of $\varphi$. Suppose first $|X|=3$, say $X=\{x_1,x_2,x_3\}$. If $\varphi(x_1)=\varphi(x_2)=y_1$, then $y_1\in\Lambda_Y(\Gamma(x_1,x_2))\cap\Lambda_Y(\Gamma(x_1,x_3))\cap\Lambda_Y(\Gamma(x_2,x_3))$. As the cardinality of each one of the three sets in this intersection is $2$, there are $y_2,y_3,y_4\in Y\setminus\{y_1\}$ such that $\Lambda_Y(\Gamma(x_1,x_2))=\{y_1,y_2\}$, $\Lambda_Y(\Gamma(x_1,x_3))=\{y_1,y_3\}$ and $\Lambda_Y(\Gamma(x_2,x_3))=\{y_1,y_4\}$. Applying Step \ref{Step 7} yields $y_2\neq y_3\neq y_4\neq y_2$, and thus $|Y|\geq 4$ which contradicts that $|X|=|Y|$. 

Assume now $|X|\geq 4$. Let $x_1,x_2\in X$ be with $x_1\neq x_2$ and suppose $\varphi(x_1)=\varphi(x_2)=y_2$. Take $\{z_1,z_2\}\in X_2$ such that $\{z_1,z_2\}\cap\{x_1,x_2\}=\emptyset$. We have $y_2\in\Lambda_Y(\Gamma(x_1,z_1))\cap\Lambda_Y(\Gamma(x_2,z_2))$; but since $|\Lambda_X(x_1,z_1)\cap\Lambda_X(x_2,z_2)|=0$, we have $|\Lambda_Y(\Gamma(x_1,z_1))\cap\Lambda_Y(\Gamma(x_2,z_2))|=0$ by Step \ref{Step 7}, a contradiction. This completes the proof that $\varphi$ is injective. 

For the second assertion, note that if $(x_1,x_2)\in \widetilde{X}$, then $\varphi(x_1)$ and $\varphi(x_2)$ are distinct and belong to $\Lambda_Y(\Gamma(x_1,x_2))$ (see Step \ref{Step 4}). Hence $\{\varphi(x_1),\varphi(x_2)\}=\Lambda_Y(\Gamma(x_1,x_2))$.
\end{proof}

\begin{cs}\label{Step 9}
There exist a nonempty subset $Y_0\subseteq Y$ and a bijection $\phi_0\colon Y_0\to X$ such that $\{y_1,y_2\}=\Lambda_Y(\Gamma(\phi_0(y_1),\phi_0(y_2)))$ for all $y_1,y_2\in Y_0$ with $y_1\neq y_2$.
\end{cs}

\begin{proof}
Assume first $|X|=2$. Then $|Y|=2$ by Step \ref{Step 2}. Hence $X=\{x_1,x_2\}$ and $Y=\{y_1,y_2\}$ for certain $(x_1,x_2)\in\widetilde{X}$ and $(y_1,y_2)\in\widetilde{Y}$. Clearly, $\widetilde{X}=\left\{(x_1,x_2),(x_2,x_1)\right\}$ and $\widetilde{Y}=\left\{(y_1,y_2),(y_2,y_1)\right\}$. Since $\Gamma$ is a map from $\widetilde{X}$ to $\widetilde{Y}$, we have $\Lambda_Y(\Gamma(x_1,x_2))=\{y_1,y_2\}$. Take $Y_0=Y$ and the bijection $\phi_0\colon Y_0\to X$ defined by $\phi_0(y_1)=x_1$ and $\phi_0(y_2)=x_2$, and the proof is finished if $|X|=2$.

Assume now $|X|\geq 3$. Let $\varphi\colon X\to Y$ be the injective map defined in Step \ref{Step 8}. Then $Y_0=\varphi(X)$ and $\phi_0=\varphi^{-1}\colon Y_0\to X$ satisfy the required conditions. 
\end{proof}

\begin{cs}\label{Step 10}
There exists a number $\lambda\in \mathbb{T}$ such that
$$
T(f)(y_1)-T(f)(y_2)=\lambda\left(f(\phi_0(y_1))-f(\phi_0(y_2))\right)                                             
$$
for all $y_1,y_2\in Y_0$ and $f\in C(X)$.
\end{cs}

\begin{proof}
Let $Y_0\subseteq Y$ and $\phi_0\colon Y_0\to X$ be the set and the bijection given in Step \ref{Step 9}. Let $y_1,y_2\in Y_0$ with $y_1\neq y_2$. By Step \ref{Step 9}, $\{y_1,y_2\}=\Lambda_Y(\Gamma(\phi_0(y_1),\phi_0(y_2)))$. Hence either $\Gamma(\phi_0(y_1),\phi_0(y_2))=(y_1,y_2)$ or $\Gamma(\phi_0(y_1),\phi_0(y_2))=(y_2,y_1)$. By Step \ref{Step 5},  we have 
$$
T(f)(y_1)-T(f)(y_2)=\pm\lambda(\phi_0(y_1),\phi_0(y_2))\left(f(\phi_0(y_1))-f(\phi_0(y_2))\right)
$$
for all $f\in C(X)$, where $\lambda(\phi_0(y_1),\phi_0(y_2))\in \mathbb{T}^+$. Put $\beta(\phi_0(y_1),\phi_0(y_2))\in\{\pm\lambda(\phi_0(y_1),\phi_0(y_2))\}$. 

We now claim that $\beta(\phi_0(y_1),\phi_0(y_2))$ does not depend on its variables $y_1,y_2$. It is clear when $|Y_0|=2$ because $\beta(\phi_0(y_1),\phi_0(y_2))=\beta(\phi_0(y_2),\phi_0(y_1))$ by Step \ref{Step 5}. Otherwise, let $y_3\in Y_0$ be with $y_3\notin\{y_1,y_2\}$. We have the equation
\begin{align*}
\beta&(\phi_0(y_1),\phi_0(y_2))\left(f(\phi_0(y_1))-f(\phi_0(y_2))\right)\\
                            &=T(f)(y_1)-T(f)(y_2)\\
								            &=\left(T(f)(y_1)-T(f)(y_3)\right)+\left(T(f)(y_3)-T(f)(y_2)\right) \\
                            &=\beta(\phi_0(y_1),\phi_0(y_3))\left(f(\phi_0(y_1))-f(\phi_0(y_3))\right)+\beta(\phi_0(y_3),\phi_0(y_2))\left(f(\phi_0(y_3))-f(\phi_0(y_2))\right)
\end{align*}
for all $f\in C(X)$. For each $i\in\{1,2\}$, consider the set 
$$
F_i=\left\{\phi_0(y_1),\phi_0(y_2),\phi_0(y_3)\right\}\setminus\left\{\phi_0(y_i)\right\}
$$
and take a function $f_i\in C(X)$ satisfying $f_i(x)=0$ for all $x\in F_i$ and $f_i(\phi_0(y_i))=1$. Taking $f=f_i$ for $i=1,2$ in the equation above, it follows that
$$
\beta(\phi_0(y_1),\phi_0(y_3))=\beta(\phi_0(y_1),\phi_0(y_2))=\beta(\phi_0(y_3),\phi_0(y_2)),
$$
as claimed. Indeed, by the arbitrariness of $y_1$, $y_2$ and $y_3$, the first equality in the preceding equation means that the function $\beta(\cdot,\cdot)$ does not depend on the second variable, while the second equality says us that the same occurs with the first one. Hence there exist a constant $\lambda\in \mathbb{T}$ such that $\beta(\phi_0(y_1),\phi_0(y_2))=\lambda$ for all $y_1,y_2\in Y_0$ with $y_1\neq y_2$. 

Now we get 
$$
T(f)(y_1)-T(f)(y_2)
=\beta(\phi_0(y_1),\phi_0(y_2))\left(f(\phi_0(y_1))-f(\phi_0(y_2))\right)
=\lambda\left(f(\phi_0(y_1))-f(\phi_0(y_2))\right)                                             
$$
for all $f\in C(X)$ and $y_1,y_2\in Y_0$. 
\end{proof}

\begin{cs}\label{Step 11}
There exists a linear functional $\mu\colon C(X)\to\mathbb{C}$ such that
$$
T(f)(y)=\lambda f(\phi_0(y))+\mu(f)                                             
$$
for every $y\in Y_0$ and $f\in C(X)$.
\end{cs}

\begin{proof}
Define a functional $\mu\colon C(X)\to\mathbb{C}$ by 
$$
\mu(f)=T(f)(y)-\lambda f(\phi_0(y))                                             
$$
for all $f\in C(X)$, where $y$ is an arbitrary point in $Y_0$. By Step \ref{Step 10}, $\mu$ is well-defined. Since $T$ is linear, so is $\mu$. 
\end{proof}

\begin{cs}\label{Step 12}
$\lambda\neq -\mu(1_X)$.
\end{cs}

\begin{proof}
By Step \ref{Step 2}, we have 
$$
T(1_X)(y)=\lambda_{1_X}+\mu_{1_X}(1_X)
$$
for all $y\in Y$, with $\lambda_{1_X}\in \mathbb{T}$ and $\lambda_{1_X}\neq -\mu_{1_X}(1_X)$. On the other hand, by Step \ref{Step 11}, we have 
$$
T(1_X)(y)=\lambda+\mu(1_X)                                             
$$
for all $y\in Y_0$. Hence $\lambda+\mu(1_X)=\lambda_{1_X}+\mu_{1_X}(1_X)\neq 0$.
\end{proof}

\begin{cs}\label{Step 13}
$\phi_0\colon Y_0\to X$ is a homeomorphism.
\end{cs}

\begin{proof}
We first prove that $\phi_0$ is continuous. Let $y\in Y_0$ and let $\{y_i\}_i$ be a net in $Y_0$ which converges to $y$. Since $X$ is compact, taking a subnet if necessary, we can suppose that $\{\phi_0(y_i)\}_i$ converges to some $x_0\in X$. We claim that $x_0=\phi_0(y)$. Otherwise, we could find an open neighborhood $U$ of $x_0$ in $X$ such that $\phi_0(y)\in X\setminus U$. Take a function $f\in C(X)$ which satisfies $f(x_0)=1$ and $f(x)=0$ for all $x\in X\setminus U$. There exists $i_0\in I$ such that $|f(\phi_0(y_i))-f(x_0)|=|f(\phi_0(y_i))-1|<1/3$ for all $i\geq i_0$ and, by Step \ref{Step 11}, it follows that $|T(f)(y_i)-T(f)(y)|=|f(\phi_0(y_i))|>2/3$ for all $i\geq i_0$, which contradicts the continuity of $T(f)$. This proves our claim and so $\phi_0$ is continuous.

We next show that $Y_0$ is closed in $Y$. Since $Y_0=Y$ in the case $|X|=2$, we suppose $|X|\geq 3$. Let $\{y_i\}_i$ be a net in $Y_0$ convergent to some point $y\in Y$. By the compactness of $X$, taking a subnet if necessary, we can suppose that $\{\phi_0(y_i)\}_i$ converges to some $x_1\in X$. Given $x_2\in X\setminus\{x_1\}$, there exists $y_2\in Y_0$ such that $\phi_0(y_2)=x_2$. By Step \ref{Step 11}, we have 
$$
T(f)(y_i)-T(f)(y_2)=\lambda\left(f(\phi_0(y_i))-f(\phi_0(y_2))\right)=\lambda\left(f(\phi_0(y_i))-f(x_2)\right)                                             
$$
for each $f\in C(X)$ and all $i\in I$. Since $f$ and $T(f)$ are continuous, taking limits in $i$ above, it follows that  
$$
T(f)(y)-T(f)(y_2)=\lambda(f(x_1)-f(x_2))
$$
for all $f\in C(X)$. Note that $y\neq y_2$ since $C(X)$ separates the points of $X$. In particular, we get
$$
T(f)(y)-T(f)(y_2)=\lambda \left(f(x_1)-f(x_2)\right)
$$
for all $f\in\mathcal{F}_{\{x_1,x_2\}}$. Hence $((y,y_2),\lambda)\in\mathcal{B}_{(x_1,x_2)}$. By Steps \ref{Step 3'5} and \ref{Step 4}, we have either $(y,y_2)\in\mathcal{A}_{(x_1,x_2)}$ or $(y_2,y)\in\mathcal{A}_{(x_1,x_2)}$. Hence either $\Gamma(x_1,x_2)=(y,y_2)$ or $\Gamma(x_1,x_2)=(y_2,y)$ by Step \ref{Step 4}. Therefore $\{y,y_2\}=\Lambda_Y(\Gamma(x_1,x_2))=\{\varphi(x_1),\varphi(x_2)\}$ by Step \ref{Step 8}, and so $y\in\varphi(X)=Y_0$. 

Finally, since $\phi_0\colon Y_0\to X$ is bijective and continuous, $Y_0$ is compact and $X$ is Hausdorff, then $\phi_0$ is a homeomorphism. 
\end{proof}

We have $|Y|=|X|=|Y_0|$ by Steps \ref{Step 2} and \ref{Step 9}. If $Y$ is finite, then $Y_0=Y$ since $Y_0\subseteq Y$, and we would obtain Steps \ref{Step 14} and \ref{Step 15} taking $\phi=\phi_0$. Suppose that $Y$ is not finite henceforth. 

\begin{cs}\label{Step 14}
There exists a continuous map $\phi\colon Y\to X$ such that 
$$
T(f)(y)=\lambda f(\phi(y))+\mu(f)
$$
for all $f\in C(X)$ and $y\in Y$. 
\end{cs} 

\begin{proof}
For each $y\in Y$, define the linear functional $S_y\colon C(X)\to\mathbb{C}$ by 
$$
S_y(f)=T(f)(y)-\mu(f) \qquad (f\in C(X)),
$$
with $\mu\colon C(X)\to\mathbb{C}$ being as in Step \ref{Step 11}. Note that $T(1_X)(y_0)=\lambda+\mu(1_X)$ for each $y_0\in Y_0$ by Step \ref{Step 11}. Since $T(1_X)$ is a constant function by Step \ref{Step 1}, it follows that $T(1_X)=(\lambda+\mu(1_X))1_Y$. Hence $S_y(1_X)=\lambda$.  

We shall now prove that $\lambda^{-1}S_y$ is multiplicative. By the Gleason--Kahane--Zelazko theorem, it suffices to show that for each non-vanishing function $f\in C(X)$, we have $S_y(f)\neq 0$. For this, let $f\in C(X)$ be with $f(x)\neq 0$ for all $x\in X$ and assume on the contrary that $T(f)(y)=\mu(f)$. Being $\phi_0\colon Y_0 \to X$ a bijective map, there exists $y_0\in Y_0$ such that 
$$
\phi_0(y_0)=\phi_f(y).
$$
In the same way we can find a sequence $\{y_i\}_{i=0 }^\infty$ in $Y_0$ satisfying 
$$
\phi_0(y_{i+1})=\phi_f(y_{i}) \qquad (i\in \mathbb{N}\cup \{0\}).
$$
Since $Y$ is a compact (first countable) space, passing through a subsequence we may assume that $\{y_i\}_i\to z_0$ for some $z_0\in Y_0$. Hence, tending $i\to \infty$ in the above equality, we get 
$$
\phi_0(z_0)=\phi_f(z_0).
$$
For each $i\in \mathbb{N}\cup \{0\}$, since $z_0,y_i\in Y_0$, Step \ref{Step 10} yields 
\begin{align*}
\lambda\left(f(\phi_0(z_0))-f(\phi_0(y_i))\right)&=T(f)(z_0)-T(f)(y_i)\\
&=\lambda_f\left(f(\phi_f(z_0))-f(\phi_f(y_i))\right) \\
&=\lambda_f\left(f(\phi_0(z_0))-f(\phi_0(y_{i+1}))\right).
\end{align*}
Hence, for each $i\in \mathbb{N}\cup \{0\}$ we have 
$$
f(\phi_0(z_0))-f(\phi_0(y_i))=\lambda^{-1}\lambda_f\left(f(\phi_0(z_0))-f(\phi_0(y_{i+1}))\right).
$$
For each $i\in \mathbb{N}\cup \{0\}$, it follows by induction on $n$ that  
$$
f(\phi_0(z_0))-f(\phi_0(y_i))=(\lambda^{-1} \lambda_f)^n \left(f(\phi_0(z_0))-f(\phi_0(y_{i+n})\right)
$$
for all $n\in \mathbb{N}$. Now, tending $n\to \infty$, we get 
$$
f(\phi_0(z_0))=f(\phi_0(y_i)) \qquad (i\in \mathbb{N}\cup \{0\}). 
$$
Therefore, we have 
$$
T(f)(z_0)=\lambda f(\phi_0(z_0))+\mu(f)=\lambda f(\phi_0(y_0))+\mu(f).
$$
On the other hand, since $f(\phi_f(y))=f(\phi_0(y_0))=f(\phi_0(z_0))$ and $\phi_f(z_0)=\phi_0(z_0)$, we also get   
\begin{align*}
T(f)(y)&=\lambda_f f(\phi_f(y))+\mu_f(f)\\
       &=\lambda_f f(\phi_0(z_0))+\mu_f(f)\\
			 &=\lambda_f f(\phi_f(z_0))+\mu_f(f)\\
			 &=T(f)(z_0).
\end{align*}
Now, since $T(f)(y)=\mu(f)$, we deduce that $f(\phi_0(y_0))=0$ which is a contradiction.
 
Hence for each $y\in Y$, $\lambda^{-1}S_y$ is a nonzero complex homomorphism on $C(X)$. This easily implies that the map $S\colon C(X)\to C(Y)$ defined by 
$$
S(f)(y)=\lambda^{-1} S_y(f)=\lambda^{-1}\left(T(f)(y)-\mu(f)\right) \qquad (f\in C(X),\; y\in Y)
$$
is a unital homomorphism and, consequently, it is continuous, as well. Thus the restriction of its adjoint map to the maximal ideal space of $C(Y)$ induces a continuous map $\phi\colon Y\to X$ satisfying 
$$
S(f)(y)=f(\phi(y)) \qquad (f\in C(X),\; y\in Y).
$$
Hence $T(f)(y)=\lambda f(\phi(y))+\mu(f)$ for all $f\in C(X)$ and $y\in Y$. 
\end{proof}

\begin{cs} \label{Step 15}
$\phi\colon Y\to X$ is a homeomorphism. 
\end{cs}

\begin{proof}
First we show that $\phi$ is injective. For this, let $y_1,y_2$ be in $Y$ and assume that $\phi(y_1)=\phi(y_2)$. Clearly, the function $\phi$ is not constant since otherwise for each $f\in C(X)$, $T(f)$ would be a constant function on $Y$ and, since $T$ is diameter preserving, this is impossible if $f$ is not constant. Hence we can find a point $y_3\in Y$ such that $\phi(y_3)\neq \phi(y_1)$. Put $x_k=\phi(y_k)$ for $k=1,2,3$. Choose $f\in\mathcal{F}'_{\{x_1,x_3\}}$ and then, using Step \ref{Step 14}, we deduce that 
$$
T(f)(y_1)-T(f)(y_3)=\lambda\left(f(x_1)-f(x_3)\right),
$$
which implies that 
$$
\lambda_f\left(f(\phi_f(y_1))-f(\phi_f(y_3))\right)=\lambda\left(f(x_1)-f(x_3))\right).
$$
Thus $\{\phi_f(y_1),\phi_f(y_3)\}=\{x_1,x_3\}$. A similar argument shows that $\{\phi_f(y_2),\phi_f(y_3)\}=\{x_2,x_3\}$. Since $x_1=x_2$, these equalities imply that $\phi_f(y_1)=\phi_f(y_2)$ and, consequently, $y_1=y_2$. 

Now we show that $\phi$ is surjective. Assume on the contrary that there exists a point $x_0\in X \backslash \phi(Y)$. Being $\phi(Y)$ compact, we can choose a function $f\in C(X)$ satisfying $f(x_0)=1$ and $f=0$ on $\phi(Y)$. Then, using Step \ref{Step 14}, we get $T(f)(y)=\mu(f)$ for all $y\in Y$, that is, $T(f)$ is a constant function, a contradiction. 

It follows immediately that $\phi$ is a homeomorphism from $Y$ onto $X$. 
\end{proof}

\begin{cs}\label{Step 16}
$T$ is bijective. 
\end{cs}

\begin{proof}
We first prove that $T$ is injective. Let $f\in C(X)$ and assume $T(f)=0$. By Step \ref{Step 1}, $\diam(f)=\diam(T(f))=0$ and thus $f$ is a constant function. Hence $f=\alpha 1_X$ for some $\alpha\in\mathbb{C}$. Since $0=T(f)=T(\alpha 1_X)=\alpha T(1_X)$ and $T(1_X)=(\lambda+\mu(1_X))1_Y$, we obtain $\alpha=0$ and thus $f=0$. 

On the other hand, given $g\in C(Y)$, the function 
$$
f=\overline{\lambda}g\circ\phi^{-1}-\frac{\overline{\lambda}\mu(g\circ\phi^{-1})}{\lambda+\mu(1_X)}1_X
$$
belongs to $C(X)$ and $T(f)=g$. Indeed, we have
\begin{align*}
T(f)&=T\left(\overline{\lambda}g\circ\phi^{-1}-\frac{\overline{\lambda}\mu(g\circ\phi^{-1})}{\lambda+\mu(1_X)}1_X\right)\\
&=\lambda\left(\overline{\lambda}g\circ\phi^{-1}\circ\phi-\frac{\overline{\lambda}\mu(g\circ\phi^{-1})}{\lambda+\mu(1_X)}1_X\circ\phi\right)+\mu\left(\overline{\lambda}g\circ\phi^{-1}-\frac{\overline{\lambda}\mu(g\circ\phi^{-1})}{\lambda+\mu(1_X)}1_X\right)\\
&=g-\frac{\mu(g\circ\phi^{-1})}{\lambda+\mu(1_X)}1_Y+\overline{\lambda}\mu(g\circ\phi^{-1})1_Y-\frac{\overline{\lambda}\mu(g\circ\phi^{-1})\mu(1_X)}{\lambda+\mu(1_X)}1_Y=g.
\end{align*}
Hence $T$ is surjective. This completes the proof of Theorem \ref{main}.
\end{proof}

\textbf{Acknowledgements.} Research partially supported by Junta de Andaluc\'{\i}a grant FQM194.

\end{document}